\def\ps@pprintTitle{%
 \let\@oddhead\@empty
 \let\@evenhead\@empty
 \def\@oddfoot{\centerline{\thepage}}%
 \let\@evenfoot\@oddfoot}
\journal{European Journal of Combinatorics}
\numberwithin{equation}{section}
\theoremstyle{plain}
\newtheorem{theorem}{Theorem}[section]
\newtheorem{lemma}[theorem]{Lemma}
\newtheorem{corollary}[theorem]{Corollary}
\theoremstyle{definition}
\newtheorem{definition}[theorem]{Definition}
\newtheorem{case[theorem]}{Case}
\theoremstyle{remark}
\newtheorem{remark}[theorem]{Remark}
\numberwithin{equation}{section}
\def\bb #1{ {\mathbb #1} }
\def\ds{\displaystyle}
\begin{document}

\begin{frontmatter}

\title{Occurrence of Right Angles in Vector Spaces Over Finite Fields}

\author{Michael Bennett}
\address{7208 Overhill Dr.\\ Pomona, NY 10970\\ \,\,\, mike.b.bennett87@gmail.com}
%\href{mailto:mike.b.bennett87@gmail.com}{mike.b.bennett87@gmail.com}

\begin{abstract} Here we examine some Erd\H os-Falconer-type problems in vector spaces over finite fields involving right angles. Our main goals are to show that

a) a subset $A \subset \bb{F}_q^d$ of size $\geq cq^{\frac{d+2}{3}}$ contains three points which generate a right angle, and

b) a subset $A \subset \bb{F}_q^d$ of size $\geq Cq^{\frac{d+2}{2}}$ contains two points which generate a right angle with

\hspace{.115 in} the vertex at the origin.

\noindent
We will also prove that b) is sharp up to constants and provide some partial results for similar problems related to spread and collinear triples.
\end{abstract}

%\maketitle
\end{frontmatter}
%\tableofcontents

%\linenumbers

\section{Introduction}

In this paper, we will be looking at some extremal problems in combinatorial geometry similar to those proposed by Erd\H os and Falconer, but in the setting of finite fields rather than in Euclidean space. Throughout, $q$ will be a power of an odd prime, and we will denote the field with $q$ elements by $\bb{F}_q$. We would like to determine how large a subset of $\bb{F}_q^d$ (with $q$ much larger than $d$) needs to be to guarantee that the set contains three points which form a right angle. We will also make some observations about extremal problems regarding ``spread." Spread is essentially a finite field analogue of the Euclidean angle. When we say that a triple $(a,b,c)$ ``generates" an angle (or spread) $\theta$, we mean that the angle between the vectors $a-b$ and $c-b$ is $\theta$. This is, in a way, an extension of the right angle problem, because to say that a triple ``generates a right angle" is roughly equivalent to saying that the spread generated by the triple is $1$. These notions will be explained much more thoroughly in section \ref{spread}. Finally, we will ask how large a subset of $\bb{F}_q^d$ needs to be to guarantee that three points in that subset are collinear and provide relatively trivial bounds on this problem, not coming particularly close to a decisive answer to the question.

%\pagebreak

This paper was motivated by a paper of Harangi et. al. which looked at the same problems, as well as many others, in $\bb{R}^d$. By looking at the same problems in a different setting, we hope to improve our understanding of the relation between the geometric structure of $\bb{R}^d$ and $\bb{F}_q^d$.

\vskip.125in

\section{Main Theorems}

%Throughout, we will use the $\gg$ symbol. We say that $X \gg Y$ if $X = \Omega(Y)$ with respect to some variable (often $q$ or $p$).

\begin{definition}
We say that an ordered triple of points $(x,y,z) \in \bb{F}_q^d \times \bb{F}_q^d \times \bb{F}_q^d$ forms a right angle if $x,y,$ and $z$ are distinct, and the vectors $x-y$ and $z-y$ have dot product 0.
\end{definition}

%Notice that two nonisotropic vectors have dot product $0$ if and only if they have spread $0$. However, if two isotropic vectors point in the same direction, then their dot product is $0$, but their spread is undefined.

\begin{theorem}\label{main1}
If $A,B \subset \mathbb{F}_q^d$, $A \cap B = \emptyset$, and $|A|^2|B| \geq 4q^{d+2}$, then there are $x,y \in A$ and $z \in B$ so that $(x,y,z)$ forms a right angle.
\end{theorem}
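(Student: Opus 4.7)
My plan is to count the number $N$ of right angles in $A \times A \times B$ and show $N > 0$ via Fourier analysis. Write
\[
N = \bigl|\{(x, y, z) \in A \times A \times B : x \neq y,\ (x-y) \cdot (z-y) = 0\}\bigr|.
\]
For each $(y, z) \in A \times B$ we have $z \neq y$ (since $A \cap B = \emptyset$), so $H_{y, z} = \{x \in \mathbb{F}_q^d : (x-y)\cdot(z-y) = 0\}$ is a hyperplane through $y$ with exactly $q^{d-1}$ points; hence $N = \sum_{(y,z) \in A \times B} |A \cap H_{y,z}| - |A||B|$, the correction removing the trivial incidences at $x = y$.

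Using additive-character orthogonality $\mathbf{1}_{t=0} = q^{-1}\sum_{s \in \mathbb{F}_q}\chi(st)$ for a fixed nontrivial $\chi$, I would isolate the $s=0$ contribution and write
\[
N = \frac{|A|^2|B|}{q} - |A||B| + E,\qquad E = \frac{1}{q}\sum_{s \neq 0}\sum_{\substack{x, y \in A \\ z \in B}} \chi\!\bigl(s(x-y)\cdot(z-y)\bigr).
\]
For $s \neq 0$, expanding $(x-y)\cdot(z-y) = x\cdot z - x\cdot y - y\cdot z + |y|^2$ and collecting the $x$-sum yields $\chi(\mathrm{phase}(s,y,z))\,\widehat{1_A}\!\bigl(-s(z-y)\bigr)$, where $\widehat{1_A}(\xi) = \sum_a 1_A(a)\chi(-a\cdot\xi)$. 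I would then apply Cauchy--Schwarz over the pair $(y,z) \in A \times B$, followed by Cauchy--Schwarz over $s \in \mathbb{F}_q^\times$, reducing matters to estimating line-sums $\sum_{s \in \mathbb{F}_q^\times}|\widehat{1_A}(sv)|^2$; Plancherel's identity $\sum_\xi|\widehat{1_A}(\xi)|^2 = q^d|A|$, together with the identity $\sum_{s \in \mathbb{F}_q}|\widehat{1_A}(sv)|^2 = q\cdot|\{(a_1,a_2) \in A^2 : (a_1-a_2)\cdot v = 0\}|$ for $v \neq 0$, then controls the resulting expression.

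Under the hypothesis $|A|^2|B| \gg q^{d+2}$ the main term $|A|^2|B|/q \gg q^{d+1}$ should dominate both the $|A||B|$ correction and the error, forcing $N>0$. The hardest part will be obtaining a sharp enough bound on $|E|$: a naive pointwise Cauchy--Schwarz on $\sum_{x \in A}\chi\!\bigl(s(x-y)\cdot(z-y)\bigr)$, summed over $y$ and $s$, produces only $|E| \lesssim q^{d/2}|A|^{3/2}|B|^{1/2}$, which would require the strictly stronger hypothesis $|A||B| \gg q^{d+2}$. Reaching the stated threshold demands exploiting the asymmetric role of $A$ (appearing twice) versus $B$ (appearing once) by Cauchy--Schwarzing on the pair $(y,z)$ rather than on $y$ alone, so that the Plancherel norm $\|\widehat{1_A}\|_2 = (q^d|A|)^{1/2}$ is charged only once rather than twice in the final error estimate.
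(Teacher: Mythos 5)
Your setup (character orthogonality, main term $|A|^2|B|/q$, diagonal correction $|A||B|$) is fine, but the proof lives or dies on the error term, and the estimate you defer is precisely the one that fails. Carrying out your plan: for $s\neq 0$ the $x$-sum gives $\chi(\mathrm{phase})\,\widehat{1_A}\bigl(-s(z-y)\bigr)$, and Cauchy--Schwarz over $(y,z)\in A\times B$ followed by Cauchy--Schwarz over $s$ reduces $|E|$ to controlling $\sum_{s\neq 0}\sum_{y\in A,\,z\in B}|\widehat{1_A}(s(z-y))|^2$. Since $\widehat{1_A}(s(z-y))$ depends on $(y,z)$ only through $w=z-y$, and each difference $w$ occurs with multiplicity at most $\min(|A|,|B|)$, Plancherel gives at best $\min(|A|,|B|)\,q^{d}|A|$ per value of $s$, leading to $|E|\lesssim q^{d/2}|A|\,|B|^{1/2}\min(|A|,|B|)^{1/2}$. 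In the regime $|A|\leq|B|$ this is exactly the bound you already identified as requiring $|A||B|\gg q^{d+2}$, and in the regime $|B|\leq|A|$ it requires $|A|\gg q^{(d+2)/2}$; neither is the stated threshold, so the asymmetric Cauchy--Schwarz does not in fact save a factor. Replacing Plancherel by the exact identity $\sum_{s}|\widehat{1_A}(sv)|^2=q\,\#\{(a_1,a_2)\in A^2:(a_1-a_2)\cdot v=0\}$ converts the problem into bounding the four-fold count $\#\{(a_1,a_2,y,z):(a_1-a_2)\cdot(z-y)=0\}$, which is of the same difficulty as the original problem and whose trivial bounds are worse. Moreover, the paper's sharpness example ($A$ an isotropic line, $B$ the complement of its orthogonal hyperplane, $|A|^2|B|=(1+o(1))q^{d+2}$, zero right angles) shows that near the threshold the error term is genuinely of the same order as the main term, so any direct counting argument must be essentially lossless; nothing in your sketch achieves that.

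The paper's proof is structurally different and sidesteps this by arguing contrapositively rather than counting. Assuming no right angle of the required form exists, every hyperplane $h_{ab}=\{z:(z-a)\cdot(b-a)=0\}$ with $a\in A$, $b\in B$ meets $A$ only in the point $a$; hence distinct $a$ give distinct hyperplanes, and the pairs in $A'\times B$ (with $A'$ half of $A$) produce at least $|A'||B|/(q-1)$ distinct hyperplanes whose union must still miss the $\lceil|A|/2\rceil$ points of $A\setminus A'$. A Cauchy--Schwarz covering lemma --- the complement of a union of $|H|$ hyperplanes has size at most $q^{d+1}/|H|$ --- then forces $|A|^2|B|\leq 4q^{d+2}$. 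The key point is that the hypothesis ``no right angle'' is used to manufacture many distinct hyperplanes \emph{before} any estimation happens; a direct Fourier count of right-angle triples has no access to this self-improvement and must beat an error term that is provably comparable to the main term. To salvage a Fourier route you would need to run it on the contrapositive as well, which essentially reproduces the paper's lemma.
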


Following immediately from the theorem, we have:

\begin{corollary}\label{singleset}
If $A \subset \mathbb{F}_q^d$ and $|A| \geq 4q^\frac{d+2}{3}$ then $A$ contains a right angle.
\end{corollary}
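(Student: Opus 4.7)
The plan is to derive this corollary from Theorem \ref{main1} by a trivial partitioning argument. Given $A \subset \bb{F}_q^d$ with $|A| \gg q^{(d+2)/3}$, I would split $A$ into two arbitrary disjoint subsets $A_1$ and $A_2$ of essentially equal size, say $|A_1| = \lfloor |A|/2 \rfloor$ and $|A_2| = |A| - |A_1|$. This choice automatically gives $A_1 \cap A_2 = \emptyset$ and both subsets have size at least $|A|/2 - 1$.

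Next I would verify the hypothesis of Theorem \ref{main1} for the pair $(A_1, A_2)$. Since $|A|^3 \gg q^{d+2}$, we get
$$|A_1|^2 |A_2| \geq \left(\tfrac{|A|}{2} - 1\right)^3 \gg q^{d+2},$$
so Theorem \ref{main1} applies and produces $x, y \in A_1$ and $z \in A_2$ with $(x,y,z)$ a right angle. Because $A_1, A_2 \subset A$, this triple lies in $A$, giving the desired right angle.

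There is essentially no obstacle: all of the combinatorial and Fourier-analytic work sits inside Theorem \ref{main1}, and the corollary is just a balanced split. The only mild subtlety worth flagging is that the hypothesis of Theorem \ref{main1} is asymmetric in $A$ and $B$ (the vertex set $A$ appears squared), but this asymmetry is harmless for the purpose of a single-set statement because a balanced split loses only a constant factor, and the exponent $(d+2)/3$ in the corollary is precisely what is needed so that the cube $|A|^3$ absorbs this loss.
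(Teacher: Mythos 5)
Your proof is correct and matches the paper's own argument: the corollary is obtained by splitting $A$ into two disjoint halves and applying Theorem \ref{main1}, with the balanced split costing only a constant factor in the hypothesis $|A_1|^2|A_2| \gg q^{d+2}$. Nothing further is needed.
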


\vspace{.1 in}

That is, given a set $A$, we may decompose it into disjoint sets $A'$ and $B'$ of roughly equal size and then apply the theorem. We will see that the bound from \ref{main1} is best possible up to the implied constant. It is not known whether the result is improvable in the form of \ref{singleset}.

We will also examine the problem of existence of right angles whose vertex is fixed at the origin:

\vspace{.1 in}

\begin{theorem}\label{main2} If $A \subset \mathbb{F}_q^d$ and $|A| \geq 4q^\frac{d+2}{2}$, then there are $x,y \in A$ so that $x \cdot y = 0$. Moreover, if $q$ is a prime, then there exists a subset $B \subset \bb{F}_q^d$ so that $|B| = \Omega(q^\frac{d+2}{2})$ but $\{x,y \in B: x \cdot y = 0\} = \emptyset$.
\end{theorem}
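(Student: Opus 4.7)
I would prove Theorem~\ref{main2} in two stages: an existence bound via Fourier/character sums, and an explicit sharpness construction based on dilates of an integer box.

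For the existence half, fix a nontrivial additive character $\chi\colon\mathbb{F}_q\to\mathbb{C}^*$ and expand
\[
N := |\{(x,y)\in A\times A : x\cdot y = 0\}| = \frac{|A|^2}{q} + \frac{1}{q}\sum_{s\neq 0}\sum_{x,y\in A}\chi(s\,x\cdot y).
\]
For each $s\neq 0$ I apply Cauchy--Schwarz in $x$ (extending the outer sum to all of $\mathbb{F}_q^d$) and then use the orthogonality identity $\sum_{x\in\mathbb{F}_q^d}\chi(sx\cdot(y-y'))=q^d\mathbf{1}_{y=y'}$ to bound $|\sum_{x,y\in A}\chi(sx\cdot y)|\le q^{d/2}|A|$. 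Summing the $q-1$ error terms and dividing by $q$ yields $N\ge |A|^2/q - q^{d/2}|A|$, which is strictly positive once $|A|\gg q^{(d+2)/2}$---producing the desired pair. (The statement permits $x=y$; even enforcing $x\neq y$ only subtracts $|A|$ on the right and does not change the threshold.)

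For the sharpness construction, assume $q=p$ is prime, identify $\mathbb{F}_p$ with $\{0,1,\dots,p-1\}$, choose $N=\lfloor\sqrt{(p-1)/d}\rfloor$, and set
\[
S = \{1,\dots,N\}^d \subset \mathbb{F}_p^d, \qquad B = \bigcup_{c\in\mathbb{F}_p^*}cS.
\]
For any $s,s'\in S$ the integer $\sum_i s_is'_i\in[d,dN^2]\subseteq[1,p-1]$ is nonzero modulo $p$; since $(cs)\cdot(c's') = cc'(s\cdot s')$ with $cc'\neq 0$, the set $B$ contains no orthogonal pair at all (and in particular no isotropic vector). For the size of $B$, write $|B| = (p-1)R$ where $R$ is the number of lines through $0$ meeting $S$; Cauchy--Schwarz then gives $R\ge |S|^2/\sum_{t\in\mathbb{F}_p^*}|T(t)|^d$ with $T(t):=\{s\in[1,N]:ts\bmod p\in[1,N]\}$ (the coordinates of $S$ decouple).

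The main technical task is the estimate $\sum_t|T(t)|^d = O(N^d)$. I would handle it by splitting the range of $t$: when $t$ represents a small positive integer $k\le N$ or its multiplicative inverse $k^{-1}$ one has $|T(t)|=\lfloor N/k\rfloor$, contributing $\sum_k(N/k)^d = O(N^d)$ because $\sum k^{-d}$ converges for $d\ge 2$; for all other $t$, a point-count on the progressions $\{s:ts\in[jp+1,jp+N]\}$ gives $|T(t)|=O(N^2/p)=O(1)$, contributing at most $O(p)\le O(N^d)$. This yields $R\gg N^d$ and therefore $|B|\gg p\cdot N^d\gg p^{(d+2)/2}$. The Fourier half is routine; the main obstacle is precisely this size estimate, i.e., showing the dilation orbit $B$ is nearly as large as the naive upper bound $(p-1)|S|$---which in turn amounts to the claim that most lines through the origin meet the lifted box $S$ in only $O(1)$ points.
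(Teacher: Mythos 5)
Your existence half is essentially the paper's own argument (character expansion of the count, Cauchy--Schwarz, orthogonality/Plancherel, threshold $|A|^2/q > q^{d/2}|A|$), so there is nothing to add there. Your sharpness construction also uses the same extremal set as the paper, namely the union of dilates of the box $S=\{1,\dots,N\}^d$ with $N\approx\sqrt{p/d}$; the difference is in how you count the lines through the origin meeting $S$. You bound $R$ from below by Cauchy--Schwarz against the second moment $\sum_{t\in\mathbb{F}_p^*}|T(t)|^d=\sum_{\ell\ni 0}|\ell\cap S|^2$, whereas the paper computes the multiplicity $|\ell_x\cap S|$ essentially exactly (as $\lfloor N/\max_i x_i\rfloor\gcd(x_1,\dots,x_d)$) and sums $\max_i x_i/\gcd(x_1,\dots,x_d)$ over the box using the asymptotic $\sum_{m\le n}\varphi(m)\sim\tfrac{3}{\pi^2}n^2$. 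Your scheme is structurally cleaner, but the estimate it rests on is not established.

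The dichotomy you assert --- either $t$ or $t^{-1}$ is an integer $k\le N$, in which case $|T(t)|=\lfloor N/k\rfloor$, or else $|T(t)|=O(N^2/p)=O(1)$ --- is false. Take $t\equiv 2\cdot 3^{-1}\pmod p$: every $s=3m$ with $1\le m\le N/3$ satisfies $ts=2m\in[1,N]$, so $|T(t)|\ge\lfloor N/3\rfloor$, yet $t$ is neither a small integer nor the inverse of one. More generally $|T(ab^{-1})|\ge\lfloor N/\max(a,b)\rfloor$ for every coprime pair $a,b\in[1,N]$; these are precisely the $t$ for which the lattice $\{(s,\,ts\bmod p)\}$ has a short vector, i.e.\ the lines through the origin with primitive integer direction $(b,a)$, and your two cases only cover $b=1$ and $a=1$. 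Including all of them, their contribution to $\sum_t|T(t)|^d$ is of order $\sum_{m\le N}\varphi(m)(N/m)^d$, which is still $O(N^d)$ for $d\ge 3$ --- so your bound survives once you enlarge the exceptional set to all low-height ratios $ab^{-1}$ and justify the $O(1)$ bound for the remaining $t$ via the shortest-vector (or three-distance) classification --- but is $\Theta(N^2\log N)$ for $d=2$, where Cauchy--Schwarz then only yields $R\gg N^2/\log N$ and $|B|\gg p^2/\log p$, falling short of the claimed $\Omega(p^{(d+2)/2})$. So the proof needs (i) the corrected case analysis of $t$, and (ii) a separate treatment of $d=2$ (e.g.\ counting primitive directions directly, as the paper's exact-multiplicity computation in effect does), since the second-moment route is intrinsically lossy there.
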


\section{Comparison with Euclidean Analog}

Perhaps the most interesting aspect of these results are how they differ from those found in \cite{HKKMMMS}. In their paper, Harangi et al. solve Euclidean versions of these problems. There they pose the problem in two different ways: For a given angle, how large must a set in $\bb{R}^d$ be to guarantee that

\vspace{.2in}
\noindent
1) that angle is generated by that set?

\vspace{.1in}
\noindent
2) an angle with size within $\delta$ of the given angle is generated by that set?

\vspace{.2in}
When we say ``how large," we are referring to the Hausdorff dimension of the set. In the paper they discover that $90^\circ$ angles are a singular case. In the sense of question 1, the critical dimension for $90^\circ$ angles is between $\frac{d}{2}$ and $\frac{d+1}{2}$. That is, there are examples of sets of Hausdorff dimension arbitrarily close to $\frac{d}{2}$ which contain no $90^\circ$ angles, while any compact set with Hausdorff dimension larger than $\frac{d+1}{2}$ \textit{must} contain a $90^\circ$ angle (which they prove rather concisely in the paper). For 0 and $180^\circ$, the critical dimension is $d-1$ (because of the $(d-1)$-sphere), while for other angles only partial answers exist. Question 2 is completely resolved in this paper, where they find that a set of Hausdorff dimension 1 always has an angle very close to $90^\circ$, while a set of lower dimension need not. It is known that any sufficiently large finite set of points contains three points at an angle as close to $180^\circ$ as desired (and therefore close to $0^\circ$ as well) (see \cite{EF}). Interestingly, $60^\circ$ and $120^\circ$ angles also have a separate result  that depends on $\delta$. For any other given angle, however, a set of Hausdorff dimension that increases to $\infty$ as $d$ goes to $\infty$ can be constructed so that a neighborhood of that angle is avoided.

In Euclidean space, Hausdorff dimension is a natural way to classify the dimension of an arbitrary set. The best analog to this classification in vector spaces over finite fields is to think of the dimension of a set $S$ as $\approx\log_q(|S|)$. This is why, in the finite field setting, the quantity we are most interested in is the exponent $\alpha$ in $|S| = cq^\alpha$. In extremal geometry, problems that can be solved in Euclidean space can often be solved in a similar manner over finite fields. Consequently, one might expect the exponent in the finite field setting to match the Hausdorff dimension in the Euclidean setting. This is why the case of right angles is interesting here. We find that the critical dimension in vector spaces over finite fields is bounded above by $\frac{d+2}{3}$, which is significantly different from the (question 1) threshold found in \cite{HKKMMMS}. As far as the Euclidean analog of theorem \ref{main1}, consider the set $(0, \infty) \times \ldots \times (0, \infty) \subset \bb{R}^d$. The dot product of any two elements in this set must be positive, and the set clearly has Hausdorff dimension $d$, so this also presents a discrepancy between the continuous and finite settings.

%\pagebreak

\section{Proof of Theorem \ref{main1}}

We begin with the following lemma.

\begin{lemma} \label{lemma}
Let $H$ be a nonempty set of hyperplanes in $\bb{F}_q^d$. Let $E = \bigcup_{h \in H} h$. Then $|E^c| \leq \frac{q^{d+1}}{|H|}.$
\end{lemma}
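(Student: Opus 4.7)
The plan is to set up a Cauchy--Schwarz inequality on point--hyperplane incidences, exploiting the fact that two distinct hyperplanes in $\bb{F}_q^d$ meet in at most $q^{d-2}$ points.

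First, I would introduce the incidence function $f(p) = |\{h \in H : p \in h\}|$ for $p \in \bb{F}_q^d$. By definition $f$ is supported on $E$, so double counting pairs $(p,h)$ with $p \in h$ yields $\sum_{p \in E} f(p) = \sum_{h \in H}|h| = |H|\,q^{d-1}$. Applying Cauchy--Schwarz on the support $E$ gives
$$(|H|\,q^{d-1})^2 = \Big(\sum_{p \in E} f(p)\Big)^2 \leq |E|\sum_{p \in E} f(p)^2 = |E|\sum_{h_1,h_2 \in H} |h_1 \cap h_2|.$$

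Second, I would estimate the intersection sum on the right-hand side. The $|H|$ diagonal terms contribute $|H|\,q^{d-1}$; for each of the remaining pairs $h_1 \neq h_2$, the intersection is either empty (parallel hyperplanes) or an affine subspace of codimension $2$, so $|h_1 \cap h_2| \leq q^{d-2}$ in either case. Thus
$$\sum_{h_1,h_2 \in H} |h_1 \cap h_2| \leq |H|\,q^{d-1} + |H|^2\,q^{d-2} = |H|\,q^{d-2}(q + |H|).$$

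Combining the two inequalities and solving for $|E|$ yields $|E| \geq |H|\,q^d/(q+|H|)$, so
$$|E^c| = q^d - |E| \leq \frac{q^{d+1}}{q + |H|} \leq \frac{q^{d+1}}{|H|},$$
which is the claim. The only real subtlety---hardly an obstacle once noticed---is that Cauchy--Schwarz must be applied on the support $E$ rather than on all of $\bb{F}_q^d$, where the replacement of $|E|$ by $q^d$ would produce only a vacuous bound; all of the geometric input reduces to the codimension-$2$ intersection estimate for distinct hyperplanes.
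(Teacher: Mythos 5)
Your proposal is correct and follows essentially the same route as the paper: a double count of point--hyperplane incidences, Cauchy--Schwarz restricted to the support $E$, the diagonal/off-diagonal split with the codimension-$2$ bound $|h_1 \cap h_2| \leq q^{d-2}$ for distinct hyperplanes, and the same algebra giving $|E| \geq q^d|H|/(q+|H|)$ and hence $|E^c| \leq q^{d+1}/|H|$. No gaps.
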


\begin{proof}

We will use `$h$' as the characteristic function for the set $h$. First we note that
\begin{equation}\label{easysum}
\sum_{h \in H, x \in \bb{F}_q^d} h(x) =  \sum_{h \in H} |h| = q^{d-1}|H|.
\end{equation}
Using Cauchy-Schwarz and the fact that every $h \in H$ is a subset of $E$, we also have
\begin{align*}
\sum_{h \in H, x \in \bb{F}_q^d} h(x) = \sum_{h \in H, x \in E} h(x) &\leq \left(\sum_{x \in E} 1^2 \right)^{1/2} \left(\sum_{x \in E} \left( \sum_{h \in H} h(x) \right)^2 \right)^{1/2}\\
&= |E|^{1/2} \left( \sum_{h_1,h_2 \in H} \sum_{x \in \bb{F}_q^d} h_1(x)h_2(x) \right)^{1/2}.
\end{align*}
If $h_1 = h_2$, then $\sum_x h_1(x)h_2(x) = q^{d-1}$, and otherwise $\sum_x h_1(x)h_2(x) \leq q^{d-2}$. Making use of equation \ref{easysum}, we now have
$$
q^{d-1}|H| \leq |E|^{1/2} \left( q^{d-1}|H| + q^{d-2}|H|^2 \right)^{1/2}.
$$
Solving for $|E|$ gives
$$
|E| \geq \frac{q^{2d-2}|H|^2}{q^{d-1}|H| + q^{d-2}|H|^2} = \frac{q^d|H|}{q + |H|}.
$$
Lastly,
$$
|E^c| = q^d-|E| \leq \frac{q^{d+1}}{q + |H|} \leq \frac{q^{d+1}}{|H|}.
$$

\end{proof}

%\noindent
We are now ready to prove the theorem. For any ordered pair $(x,y) \in \bb{F}_q^d \times \bb{F}_q^d$ with $x \neq y$, Define $h_{xy}$ to be
$$\{z: (z-x) \cdot (y-x) = 0\},$$
i.e. the (translated) hyperplane through the point $x$ with normal vector in the direction of $y-x$. 

\vspace{.1in}
\noindent
Let $A,B \subset \bb{F}_q^d$ with $A \cap B = \emptyset$. Let $A'$ be any subset of $A$ with $|A'| = \left \lfloor \frac{|A|}{2} \right \rfloor$. We consider the set of hyperplanes $H = \{h_{xy} : (x,y) \in A' \times B, x \neq y \}$. Notice first, that if $c \in \left(h_{ab} \backslash \{a\}\right) \cap A$, then $(b,a,c)$ defines a right angle. Thus we will assume that $h_{ab} \cap A = \{a\}$ for all $a \in A, b \in B$. Consequently, $h_{ab} = h_{cd}$ only when $a = c$ (as otherwise $h_{ab} \cap A$ contains two distinct elements $a$ and $c$). It is also important to note that under certain circumstances, it is possible to have $y \in h_{xy}$ (see \ref{iso}). We want to ensure that $x$ is the only point of $A$ in $h_{xy}$, which is why we insist that $A$ and $B$ be disjoint. Finally, notice that for fixed $a$ and $b$, there are exactly $q-1$ choices of $b'$ so that $h_{ab} = h_{ab'}$, i.e. all points on the line through $a$ and $b$ except for $a$.

\vspace{.1in}
\noindent
Because there are $|A'||B|$ elements in $A' \times B$, there must be at least $\frac{|A'||B|}{q-1}$ distinct planes in $H$.

Let $E = \bigcup_{h \in H} h$. By assumption, $A \backslash A' \subset E^c$. Notice that $|A \backslash A'| = \left \lceil \frac{|A|}{2} \right\rceil$. Using Lemma \ref{lemma}, we have
$$|A \backslash A'| \leq \frac{q^{d+1}(q-1)}{|A'||B|}$$
or equivalently,
$$\left \lceil\frac{|A|}{2} \right\rceil  \leq \frac{q^{d+1}(q-1)}{\left\lfloor \frac{|A|}{2} \right\rfloor |B|}$$
Solving for $|A|$ and $|B|$ gives us  
$$|A|^2|B| \leq 4q^{d+2}$$
In other words, if none of the points of $B$ completes a right angle with any of the pairs of points in $A$, then $|A|^2|B|$ can be no larger than $4q^{d+2}$.

\subsection{\textbf{Sharpness}}
If $d \geq 3$ then we can find an isotropic line $L \subset \bb{F}_q^d$ (see \ref{iso}). Indeed, if $x$ is an isotropic vector then the line through $x$ and the origin has the aforementioned property.

Now let $L$ be an isotropic line and let $H$ be the hyperplane that is perpendicular to $L$. Remember, because $L$ is isotropic, $L \subset H$. Because of this, the only hyperplane parallel to $H$ that meets $L$ is $H$ itself. Then, in the language of theorem \ref{main1}, we choose $A = L$ and $B = \bb{F}_q^d \backslash H$. Notice that $|A|^2|B| = q^{d+2}(1 + o(1))$. If $x,y \in A$ and $z \in B$, then
$$
(x-y) \cdot (z-y) = z \cdot (x-y) - x \cdot y + y \cdot y = z \cdot (x-y).
$$
Now $x-y \in L$ and $z$ is not orthogonal to $L$ by construction, so $(x-y) \cdot (z-y) \neq 0$, which completes the proof.
In fact, this setup also works for $d=2$ whenever $q \equiv 1 \bmod 4$, since $\bb{F}_q$ contains a square root of $-1$, and hence an isotropic line. (In dimension $2$, notice that an isotropic line is its own orthogonal hyperplane).

\section{Proof of Theorem \ref{main2}}

This theorem has already been proved by Hart et. al. in \cite{HIKR11} using Fourier techniques. In fact they also show that if $|A| \geq Cq^\frac{d+1}{2}$, then for any \textit{nonzero} scalar $t$, there are two points in $A$ (not necessarily distinct) whose dot product is $t$. Here we provide a different short proof of the result for $t = 0$, once again using \ref{lemma}. More importantly, we also show that the $0$ case is an exceptional case---that a guaranteed dot product of $0$ requires about $q^\frac{d+2}{2}$ points of $\bb{F}_q^d$, at least in the case $q$ is prime.

Let $A \subset \bb{F}_q^d$. If $A$ contains the origin, just use $A \backslash \{\vec{0}\}$ instead. Let $h_a$ be the hyperplane (through the origin) perpendicular to $a$. Notice that for each $a \neq \vec{0}$ there are exactly $q-1$ points $b \in \bb{F}_q^d \backslash \{\vec{0}\}$ so that $h_a = h_b$, namely $b = sa$ for any scalar $s \neq 0$. Therefore, if we let $E = \bigcup_{a \in A} h_a$, $E$ is the union of at least $\frac{|A|}{q-1}$ distinct hyperplanes.

By lemma \ref{lemma}, $|E^c| \leq \frac{q^{d+1}}{|A|/(q-1)}$. If $|A| > |E^c|$, then $A$ and $E$ intersect, and thus two points of $A$ have dot product $0$. To avoid this, we must have $|A| < \frac{q^{d+1}}{|A|/q}$, or equivalently, $|A| < q^{\frac{d+2}{2}}$.

\vspace{.2 in}
Any isotropic vector is orthogonal to itself. If we want to be sure that there are two \textit{distinct} points $a$ and $b$ so that $(a,\vec{0},b)$ forms a right angle, then we can use the same trick from the proof of \ref{main1}. That is, we split $A$ into two disjoint subsets of size around $|A|/2$, generate a set of hyperplanes from one of the subsets, and show that the other subset intersects one of those hyperplanes. In this case, we get $|A| < 4q^{\frac{d+2}{2}}$.

\vspace{.2 in}

%That this result is sharp follows immediately from a result of Mubayi and Williford. In \ref{MS}, they show that 

%\iffalse--------------------------------------------

\subsection{\textbf{Sharpness}}

We should first note that in the case $d=3$, the sharpness of this result has essentially been proved already by Mubayi and Williford. In \cite{MW}, they find a set of mutually non-orthogonal lines through the origin in $\bb{F}_q^3$ of cardinality $\Theta(q^{3/2})$. In fact, they construct explicit sets of lines for every $q = p^n$ (separating $p=2$ and $p \neq 2$ cases, as well as cases of even and odd $n$) and find tight bounds on the sizes of these sets. Since all of these lines pass through the origin, the set of all points lying on these lines has size $\Theta(q^{5/2})$. By construction, no pair of these points can have dot product zero, and $\Theta(q^{5/2})$ is sharp per \ref{main2}. (Mubayi and Williford also prove that their estimate is sharp using graph theoretic techniques).

Here we will show that for any $d$, there is a set of mutually non-orthogonal lines through the origin in $\bb{F}_p^d$ of size $\Theta(p^{\frac{d}{2}})$. This will in turn give us a set of $\Theta(p^{\frac{d+2}{2}})$ points, showing that our estimate in \ref{main2} is sharp up to a factor depending on $d$. Notice, however, that we insist on a prime $p$, as our method of proof calls for it.

\vspace{.1 in}
Order the elements of $\bb{F}_p$ in the natural way, from $0$ to $p-1$. Let $d \geq 2$ (but much smaller than $p$) and define $\sigma = \lfloor \sqrt{p/d} \rfloor$ for ease of notation. Notice that $\sigma \neq \sqrt{p/d}$, as $ d \nmid p$.

\noindent
Let $A = \{a \in \bb{F}_p: 1 \leq a \leq \sigma\}$ and let $A^d \subset \bb{F}_p^d$ be the $d$-fold cross product. If $x,y \in A^d$ then $d \leq x \cdot y \leq p-1$. Right away, we see the set $A^d$ contains no pair of points whose dot product is $0$. We will show that the number of lines through the origin that meet $A^d$ is $\Theta(p^\frac{d}{2})$.

This is very similar to the problem of determining the number of lattice points in $\{1, \ldots, \sigma\}^d \subset \bb{Z}^d$ that are visible from the origin, or equivalently, the number of relatively prime $d$-tuples of integers between 1 and $\sigma$. This is well known to be about $\frac{\sigma^d}{\zeta(d)}$ for large $\sigma$ (see, for instance, \cite{Ny}), where $\zeta$ is the Riemann zeta function. Even though the notion of ``visibility from the origin" and ``relatively prime" are ambiguous in finite fields, this estimate will still work for us in this context. 

First, let us consider the case $d = 2$. The points $(a_1,a_2),(b_1,b_2) \in A^2$ lie on the same line if and only if $a_1b_2 = a_2b_1 \bmod p$. Because $1 \leq a_1,a_2,b_1,b_2 \leq \sigma$, we see $a_1b_2$ and $a_2b_1$ are both less than $p/d$, and therefore $a_1b_2 = a_2b_1$ (as elements of $\bb{Z}$). In other words, $(a_1,a_2),(b_1,b_2) \in A^2$ lie on the same line through the origin only if $(a_1,a_2),(b_1,b_2) \in \bb{Z}^2$ lie on the same line through the origin. It is easy to see that this extends to higher dimensions as well. Thus the number of distinct lines that meet $A^d$ is equal to the number of distinct lines that meet $\{1, \ldots, \sigma\}^d \subset \bb{Z}^d$.

In our case, the number of distinct lines through the origin that intersect $A^d$ must be
$$\frac{\sigma^d}{\zeta(d)}(1 - o(1)) = \frac{p^{\frac{d}{2}}}{d^{\frac{d}{2}}\zeta(d)}(1 - o(1)),$$ which gives us a set of $\ds \frac{1 - o(1)}{d^{\frac{d}{2}}\zeta(d)}p^{\frac{d+2}{2}}$ points in $\bb{F}_p^d$ so that no two have a dot product of 0.

\section{Spread}\label{spread}

The length of a vector $a \in \bb{F}_q^d$ is defined by

$$
\|a\| = \sum_{i = 1}^d a_i^2.
$$
We will use $\|a-b\|$ to refer to the distance between points $a$ and $b$. Without the square root, this definition does not agree with the Euclidean space definition, and doesn't fit the the qualifications of a metric; however it does retain the important property of invariance under the action of the orthogonal group. 

\vspace{.1 in}
The spread $S$ between two vectors $a,b \in \bb{F}_q^d$ is defined by
$$
S(a,b) = 1- \frac{(a \cdot b)^2}{\|a\| \|b\|}.
$$
We see that this definition is somewhat consistent with that of angles in Euclidean space, as 
$$
\sin^2(\theta) = 1-\frac{(a \cdot b)^2}{|a|^2 |b|^2}.
$$
It might seem more natural to define spread in analogy with the $\sin \theta$ or $\cos \theta$ formula, rather than $\sin ^2 \theta$. However, the square root operation is not well-defined from $\bb{F}_q \rightarrow \bb{F}_q$, so this is essentially impossible. If we were to define angles in {\it Euclidean} space via $\theta = \phi$ iff $\sin ^2 \theta = \sin^2\phi$, there would be no distinction between an angle and its supplement. But in vector spaces over finite fields, two such angles cannot be distinguished, because rays and lines are identical objects. That is, a ray originating at the origin through the point $b$ also goes through the point $-b$. So spread is a perfectly reasonable analog to the Euclidean angle.

\begin{definition}\label{iso}
An isotropic vector is a vector $\vec{v}$ such that $\vec{v} \neq \vec{0}$ but $\|\vec{v}\| = 0$. An isotropic line is a line through the origin in the direction of an isotropic vector.
\end{definition}

Notice that for any two vectors $a,b$ on an isotropic line, $a \cdot b = 0$.

\vspace{.1 in}

Our definition of spread is a bit problematic with isotropic vectors since we need to divide by vector length to determine the spread. We will just say the spread is undefined in this situation. Isotropic vectors can be found in $\bb{F}_q^d$ for $d > 2$ and in $\bb{F}_q^2$ when $q \equiv 1 \bmod 4$. Fortunately, in the arguments we present here, isotropic vectors are not a significant obstacle.

%\pagebreak 
\noindent
Here are a few easily verified properties of spread:
\\

\noindent
$S(a,b) = S(ra,sb)$ for any $r,s \in \bb{F}_q^*.$

\noindent
$S(a,b) = S(b,a).$

\noindent
$S(a,b) = S(\sigma (a), \sigma (b))$ where $\sigma$ is an element of the orthogonal group.

\vspace{.2 in}
\noindent
These are consistent with properties of the angle between vectors in Euclidean space. Notice that two vectors with a spread of $1$ have a dot product of $0$, and the converse is true for non-isotropic vectors. Therefore, we can think of two vectors that have a spread of $1$ as making a right angle. In the next theorem we look at other spreads, which have surprisingly different results from spread $1$.

\vspace{.2 in}
We have the following theorem from \cite{HIKR11}, where $S_t^{d-1} = \{x \in \bb{F}_q^d: \|x\| = t\}$ is the $(d-1)$-dimensional sphere of radius $t$ centered at the origin:
\begin{theorem}\label{distsphere}
Suppose $E \subset S_t^{d-1} \subset \bb{F}_q^d$ and $|E| \geq cq^\frac{d}{2}$ for $t \neq 0$ and $d \geq 3$. Then the points of $E$ generate all distances if $d$ is even and generate a positive proportion of distances if $d$ is odd.
\end{theorem}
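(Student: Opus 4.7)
The plan is to follow the Fourier-analytic strategy of \cite{HIKR11}, which is the same circle of techniques used in the proof of Theorem~\ref{main2}. The starting observation is that if $x,y\in S_t^{d-1}$, then $\|x-y\|=\|x\|+\|y\|-2x\cdot y=2t-2x\cdot y$, so the distances realized by pairs in $E\times E$ are in bijection with the dot products $x\cdot y$ via the affine map $r\mapsto(2t-r)/2$. It therefore suffices to analyze $\mu(c):=\#\{(x,y)\in E^2:x\cdot y=c\}$ and to show that $\mu(c)>0$ for every $c\in\bb F_q$ when $d$ is even, and for a positive proportion of $c$ when $d$ is odd.

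Expanding $\mu(c)$ by orthogonality of additive characters gives
\begin{equation*}
\mu(c)=\frac{|E|^2}{q}+\frac{1}{q}\sum_{s\in\bb F_q^*}\chi(-sc)T(s),\qquad T(s):=\sum_{x,y\in E}\chi(sx\cdot y).
\end{equation*}
The main term is $\gg q^{d-1}$ whenever $|E|\gg q^{d/2}$, so everything reduces to bounding $T(s)$. I would rewrite $T(s)=q^d\sum_xE(x)\hat E(-sx)$, apply Cauchy--Schwarz, and then use $E\subset S_t$ to obtain $|T(s)|^2\le q^{2d}|E|\sum_{\xi\in S_{s^2t}}|\hat E(\xi)|^2$ via the change of variable $\xi=-sx$. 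This step is where the sphere hypothesis is crucial: for a generic set of the same cardinality, Plancherel alone would only produce the Iosevich--Rudnev threshold $|E|\gg q^{(d+1)/2}$.

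The spherical average unfolds to $\sum_{\xi\in S_r}|\hat E(\xi)|^2=q^{-d}\sum_{a,b\in E}\widehat{S_r}(a-b)$, so the remaining task is a pointwise bound on $\widehat{S_r}(\xi)$. Writing $S_r(x)=q^{-1}\sum_u\chi(u(\|x\|-r))$ and completing the square in $x$ yields
\begin{equation*}
\widehat{S_r}(\xi)=q^{-d-1}\sum_{u\in\bb F_q^*}\chi\!\left(-ur-\tfrac{\|\xi\|}{4u}\right)G(u)^d\qquad(\xi\neq 0),
\end{equation*}
where $G(u)=\sum_y\chi(uy^2)$ is the quadratic Gauss sum. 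Since $G(u)^d=\eta(u)^dG(1)^d$ with $\eta$ the quadratic character, the inner $u$-sum is a pure Kloosterman sum when $d$ is even and a Sali\'e-type sum (twisted by $\eta$) when $d$ is odd; both give $|\widehat{S_r}(\xi)|\lesssim q^{-(d+1)/2}$. Feeding this into the bound for $|T(s)|$ and summing over $s\neq 0$ produces total error $o(|E|^2/q)$ in the even case once $|E|\gg q^{d/2}$, so $\mu(c)>0$ for every $c$ and every distance is generated.

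In odd dimension the $\eta$-twist introduces a genuine Gauss-sum factor of size $\sqrt q$ after summing in $s$, which is too large to give a pointwise lower bound on $\mu$. The natural remedy, which I would pursue, is to pass to an $L^2$ statement: estimate $\sum_c\mu(c)^2$ from above (here the Gauss-sum obstruction washes out on average) and then apply the elementary inequality $(\sum_c\mu(c))^2=|E|^4\le|\{c:\mu(c)>0\}|\cdot\sum_c\mu(c)^2$ to deduce that a positive proportion of $c$'s are attained. The hardest part of the whole argument is the Gauss-sum bookkeeping at the even/odd transition: isolating the contribution of the isotropic cone $\{\xi:\|\xi\|=0\}$ and confirming that the saving from the sphere restriction is exactly the factor of $q^{1/2}$ required to push the threshold exponent from $(d+1)/2$ down to $d/2$.
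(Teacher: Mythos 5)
First, a point of reference: the paper does not prove this statement at all --- Theorem \ref{distsphere} is quoted from \cite{HIKR11} and used as a black box --- so there is no in-paper argument to compare yours against, and I can only assess your sketch on its own terms. Your reduction of distances to dot products, the identity $\mu(c)=\frac{|E|^2}{q}+\frac{1}{q}\sum_{s\ne 0}\chi(-sc)T(s)$, the Cauchy--Schwarz bound $|T(s)|^2\le q^{2d}|E|\sum_{\xi\in S_{s^2t}}|\hat E(\xi)|^2$, the unfolding of the spherical average, and the Kloosterman/Sali\'e bound $|\widehat{S_r}(\xi)|\lesssim q^{-(d+1)/2}$ are all correct. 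The gap is the sentence claiming that feeding these in and ``summing over $s\ne 0$ produces total error $o(|E|^2/q)$ once $|E|\gg q^{d/2}$.'' It does not. Run the numbers at $|E|=q^{d/2}$: the spherical average is $\lesssim q^{-d}\cdot|E|^2q^{-(d+1)/2}=q^{-(d+1)/2}$, so $|T(s)|\lesssim q^{d-\frac14}$, and $\frac1q\sum_{s\ne0}|T(s)|\lesssim q^{d-\frac14}$, while the main term is only $|E|^2/q=q^{d-1}$; the error exceeds the main term by a factor of $q^{3/4}$. Summing $|T(s)|$ with absolute values over all $q-1$ values of $s$ is exactly where a factor of $q$ is lost, and even square-root cancellation in the $s$-sum would only bring you to the borderline. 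No rearrangement of these particular estimates yields a pointwise lower bound on $\mu(c)$ below the Iosevich--Rudnev threshold $|E|\gg q^{(d+1)/2}$, so the ``all distances when $d$ is even'' clause is genuinely unproved in your outline and requires an additional idea (this is precisely the delicate part of the corresponding argument in \cite{HIKR11}).

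Your second-moment fallback, which you reserve for odd $d$, is in fact the part of your plan that really does realize the exponent $\frac d2$ --- but for a different reason than the one you give. The saving is not that ``the Gauss-sum obstruction washes out on average''; it is that the spheres $S_{s^2t}$ are pairwise disjoint as $s^2t$ varies and each $\xi\ne 0$ lies on at most two of the sets $\{sx:x\in S_t\}$, so that
\begin{equation*}
\sum_{s\ne0}|T(s)|^2\;\le\;q^{2d}|E|\sum_{s\ne0}\sum_{\xi\in S_{s^2t}}|\hat E(\xi)|^2\;\le\;2q^{2d}|E|\cdot q^{-d}|E|\;=\;2q^{d}|E|^2
\end{equation*}
by Plancherel alone, with no bound on $\widehat{S_r}$ needed anywhere. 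This gives $\sum_c\mu(c)^2\le\frac{|E|^4}{q}+\frac{2q^d|E|^2}{q}\lesssim\frac{|E|^4}{q}$ once $|E|\gg q^{d/2}$, and your Cauchy--Schwarz inequality $|E|^4\le|\{c:\mu(c)>0\}|\sum_c\mu(c)^2$ then yields a positive proportion of dot products, hence of distances, in every dimension. So as written your proposal establishes the odd-$d$ (positive proportion) half of the theorem but not the even-$d$ (all distances) half; to repair the latter you would need to consult the sphere section of \cite{HIKR11} rather than push the pointwise bound on $\widehat{S_r}$ harder.
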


We also have the related Lemma 2.1 from \cite{IR}:

\begin{lemma}\label{zerodistance}
If $|E| \geq Cq^{d/2}$ for a sufficiently large $C$, then 
$$
\#\{(x,y) \in E \times E: \|x-y\| = 0\} \leq \frac{1}{\sqrt{2}}|E|^2.
$$

\end{lemma}

\noindent
In particular, this means that if $E$ is a large enough set, then a positive proportion of pairs of vertices in $E$ are separated by a nonzero distance.

We will use these results to make the proof of our next theorem rather easy.

\begin{theorem}\label{anglesphere}
Let $d \geq 3$, $E \subset \bb{F}_q^d$, $|E| \geq Cq^\frac{d+2}{2}$ for some sufficiently large constant $C$. Then there is a constant $c$ independent of $q$ so that at least $cq$ distinct spreads occur in $E$. 
\end{theorem}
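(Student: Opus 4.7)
The plan is to reduce to Theorem \ref{distsphere}. I want to find a vertex $b \in E$ and a nonzero value $t \in \bb{F}_q^*$ such that many points of $E$ lie on the sphere $S_t(b) = \{x \in \bb{F}_q^d : \|x-b\| = t\}$; Theorem \ref{distsphere} will then yield many distinct distances among those points, which convert directly into distinct spreads with vertex $b$.

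For the first step, for each $t \in \bb{F}_q^*$ I would count
$$N(t) = \#\{(a,c) \in E \times E : \|a-c\| = t\}.$$
Expanding the indicator $\mathbf{1}_{\|a-c\| = t}$ as a character sum, in the spirit of the proof of Theorem \ref{main2}, yields the standard estimate
$$\left| N(t) - \frac{|E|^2}{q} \right| \leq C' q^{(d-1)/2}|E| \qquad (t \neq 0).$$
Since $|E| \geq C q^{(d+2)/2}$, this error is dominated by the main term once $C$ is large, so $N(t) \geq \tfrac{1}{2}|E|^2/q$ uniformly for $t \neq 0$. Averaging the identity $\sum_{b \in E} |E \cap S_t(b)| = N(t)$ then yields some $b \in E$ with $|E \cap S_t(b)| \gg |E|/q \geq c q^{d/2}$.

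Translating by $-b$, the set $E' := (E - b) \cap S_t^{d-1}$ has $|E'| \gg q^{d/2}$ and lies on the sphere of radius $t$ about the origin, so Theorem \ref{distsphere} supplies $\gg q$ distinct distances $\|x-y\|$ with $x,y \in E'$. For these, $\|x\| = \|y\| = t$ yields
$$x \cdot y = \tfrac{1}{2}(\|x\| + \|y\| - \|x-y\|) = t - \tfrac{1}{2}\|x-y\|,$$
so the spread $S(x,y) = 1 - (x \cdot y)^2/t^2$ is a quadratic polynomial in $\|x-y\|$, hence at most two-to-one. Therefore $\gg q$ distinct distances produce $\gg q$ distinct spreads, each realised by a triple $(x+b,\, b,\, y+b) \in E^3$ with vertex $b$.

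The main obstacle is the first step. The hypothesis $|E| \geq C q^{(d+2)/2}$ is exactly the threshold required: pigeonhole over $b \in E$ can produce a sphere containing at most $\sim N(t)/|E| \sim |E|/q$ points of $E$, and to invoke Theorem \ref{distsphere} this must be $\gg q^{d/2}$, forcing $|E| \gg q^{(d+2)/2}$. The Fourier estimate is what guarantees that $N(t)$ for some nonzero $t$ behaves like $|E|^2/q$ and is not drowned out by a large count $N(0)$ of isotropic pairs, which cannot be ruled out by elementary means once $d \geq 5$.
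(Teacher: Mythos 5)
Your argument is correct, and its overall architecture coincides with the paper's: find a sphere containing $\gg q^{d/2}$ points of $E$, apply Theorem \ref{distsphere} to obtain $\gg q$ distinct distances among those points, and note that for $x,y$ on a sphere of radius $t$ about the centre $b$ the spread of the triple $(x,b,y)$ is an at-most-two-to-one quadratic function of $\|x-y\|$ (the paper phrases this as $S(a,b)=S(c,d)$ iff $\|a-b\|=\|c\pm d\|$), so $\gg q$ distances give $\gg q$ spreads with vertex $b$. The genuine difference is in how the rich sphere is located. The paper translates so that some point of $E$ is the origin and pigeonholes $|E|$ over the $q$ concentric spheres; as written this does not account for the radius-zero sphere (the isotropic cone), which has $\sim q^{d-1}$ points and, for $d\geq 4$, could in principle contain essentially all of $E$ relative to that particular centre, so the pigeonhole needs an extra remark that \emph{some} choice of centre avoids this. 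Your first step supplies exactly that: the standard Fourier estimate $N(t)=|E|^2/q+O(q^{(d-1)/2}|E|)$ for $t\neq 0$ (the error being dominated once $|E|\geq Cq^{(d+2)/2}$), followed by averaging $\sum_{b\in E}|E\cap S_t(b)|=N(t)$ over centres, produces $b\in E$ and $t\neq 0$ with $|E\cap S_t(b)|\gg |E|/q\gg q^{d/2}$. The price is importing the sphere-counting estimate from \cite{HIKR11}; the gain is an argument that is uniform and airtight for all $d\geq 3$, and which makes transparent why the exponent $\frac{d+2}{2}$ is exactly the threshold at which this reduction to Theorem \ref{distsphere} operates. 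The remaining steps, including the translation invariance needed to read $S(x,y)$ as the spread of the triple $(x+b,\,b,\,y+b)\in E^3$, match the paper.
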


\noindent
(The result from this theorem has been improved in \cite{LPV}. They show that $|E| \geq (1 + \epsilon)q^{\lceil d/2 \rceil}$ is sufficient to guarantee $cq$ spreads and that this estimate is tight. Because their paper references theorem \ref{anglesphere}, I have chosen to leave it in.)

\vspace{.1 in}

\begin{remark}
Notice that if $a,b \in \bb{F}_q^d$ and $\|a\| = \|b\|$, then $1- S(a,b) = 0$ or is a square in $\bb{F}_q$. Therefore, if $S^{d-1}$ is the $(d-1)$-dimensional sphere of radius $t$ in $\bb{F}_q^d$, then $\{1- S(a,b): a,b \in S_t^{d-1}\}$ contains no nonsquares. If we were to look at the set of spreads generated by triples of points on $S_t^{d-1}$, we may very well get all spreads. However, in our proof of the above theorem, our strategy will be to look at spreads of triples $(a,\vec{0},b)$, and thus we will only guarantee a positive proportion of spreads.
\end{remark}

\begin{proof}

Let $E \subset \bb{F}_q^d$ with $|E| > C_1q^\frac{d+2}{2}$. By lemma \ref{zerodistance}, at least $C_2|E|^2$ pairs of points in $E$ are separated by a nonzero distance. Therefore, at least one point $x \in E$ has the property $$\#\{y \in E: \|x-y\| \neq 0\} \geq C_3q^\frac{d+2}{2}.$$ Without loss of generality, we will suppose that $\vec{0}$ is such a point. With $C_3$ sufficiently large, there must be some $t \in \bb{F}_q^*$ so that the size of the sphere $S_t^{d-1}$ of radius $t$ centered at the origin has size $> C_4q^\frac{d}{2}$ by pigeonholing. For simplicity, we will suppose $t = 1$, but the following arguments work for any fixed value of $t$.

Next we will show that for points $a,b,c,d$ on the sphere,
$$
S(a,b) = S(c,d) \text{ if and only if } \|a-b\| = \|c\pm d\|.
$$
First, notice that $\|a-b\| = 2 - 2a \cdot b$. If $\|a-b\| = \|c-d\|$, then $a \cdot b = c \cdot d$. If $\|a-b\| = \|c-(-d)\|$, then $a \cdot b = -c \cdot d$. Either way, we have  $(a \cdot b)^2 = (c \cdot d)^2$, and thus $S(a,b) = S(c,d)$. This argument can be reversed to recover the other direction.

We now know that, given a positive proportion of the distances on the sphere, we generate a positive proportion of the spreads on the sphere. Invoking \ref{distsphere}, since $|E \cap S_t^{d-1}| \geq C_4q^\frac{d}{2}$ then the set of triples $\{(a,\vec{0},b): a,b \in E \cap S_t^{d-1}\}$ generate a positive proportion of spreads.

\end{proof}

This result says nothing of substance about  dimension $2$, so we will now prove the following:

\begin{theorem}\label{2D}
If $A \subset \bb{F}_q^2$ and $|A| \geq 2q-1$ then $A$ generates all spreads.
\end{theorem}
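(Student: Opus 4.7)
The approach is to show that every spread arising from a pair of distinct non-isotropic directions in $\bb{F}_q^2$ is realized by a triple in $A$, with the vertex chosen inside $A$. The proof boils down to a pigeonhole count on the $q$ lines parallel to a given direction, combined with a two-direction inclusion-exclusion.

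First I would prove the following counting lemma. Fix a direction $\ell$ in $\bb{F}_q^2$. The $q$ lines parallel to $\ell$ partition $A$, and if $M_\ell \subset A$ denotes the set of $y \in A$ lying on a parallel line that contains at least one other point of $A$, then $|A \setminus M_\ell|$ equals the number of singleton parallel lines. Since $|A| \geq 2q-1 > q$, these $q$ lines cannot all be singletons, so at most $q-1$ are, giving $|M_\ell| \geq |A| - (q-1)$.

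Next, for any two distinct directions $\ell_1, \ell_2$, inclusion-exclusion yields
$$ |M_{\ell_1} \cap M_{\ell_2}| \geq |M_{\ell_1}| + |M_{\ell_2}| - |A| \geq 2(|A| - q + 1) - |A| = |A| - 2q + 2, $$
which is at least $1$ precisely when $|A| \geq 2q - 1$. Any $y$ in this intersection comes with $x, z \in A \setminus \{y\}$ such that $x - y \in \langle \ell_1 \rangle$ and $z - y \in \langle \ell_2 \rangle$; because $\ell_1 \neq \ell_2$ we have $x \neq z$, so the triple $(x, y, z)$ generates the spread $S(\ell_1, \ell_2)$. Every nonzero achievable spread $s$ equals $S(\ell_1, \ell_2)$ for some pair of distinct non-isotropic directions (write $s = S(a,b)$ for non-isotropic $a, b$ and take $\ell_i$ to be their spans, which are distinct since $s \neq 0$), so every such $s$ is realized.

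The main delicate point---and the reason the threshold lands at exactly $2q-1$---is that the inclusion-exclusion estimate is tight at this threshold: a single point less and $|M_{\ell_1} \cap M_{\ell_2}|$ can vanish, so the pigeonhole step above must be squeezed to full strength. The only loose end is the spread-$0$ case, which requires three collinear points of $A$ on a non-isotropic line. For $q \equiv 3 \pmod 4$ no isotropic lines exist, so the arc bound in $AG(2,q)$ (any set larger than $q+1$ has three collinear points) finishes it immediately; for $q \equiv 1 \pmod 4$ some care is needed to rule out pathological configurations concentrated on the two isotropic directions, but this is minor compared with the main counting argument.
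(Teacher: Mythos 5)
Your argument is essentially the paper's proof: the same pigeonhole count showing that at least $|A|-(q-1)\geq q$ points of $A$ share a parallel line with another point of $A$ in each of the two chosen directions, followed by the same inclusion-exclusion on $|A|=2q-1$ to force a common vertex $y$ realizing the spread. The paper likewise excludes spread $0$ from this argument and defers it to the collinear-triples section, so your closing remarks on that case (and on isotropic directions) only make explicit what the paper also sets aside.
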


\noindent
This is, of course, sharp up to the coefficient of $q$, as a line in $\bb{F}_q^2$ has $q$ points but only generates one spread (namely spread = 0, as long as the line is not parallel to an isotropic line).

Before we start the proof, we need to make an observation. The ``spread between two lines," where we think of the vertex as the intersection point of the lines, is well-defined because

\vspace{.125 in}
\noindent
1) $S(a,b) = S(ra,sb)$ for any $r,s \in \bb{F}_q^*$ (as stated earlier) and 2) the spread generated by the triple $(a,b,c)$ is the same as the spread generated by $(a+z,b+z,c+z)$ for any $z \in \bb{F}_q^2$.

\vspace{.125 in}
\noindent
Property 1 shows that if $b$ is the intersection of two lines, then the spread generated by $(a,b,c)$ will be consistent for any choice of $a \neq b$ on the first line, and any choice of $c \neq b$ on the second. Property 2 shows that the point of intersection need not be the origin. Moreover, it is easy to see that if $l_1$ and $l_2$ are parallel lines, then for any other nonparallel line $l_0$ (as long as it is not parallel to an isotropic line), the spread between $l_1$ and $l_0$ is equal to the spread between $l_2$ and $l_0$.

\begin{proof}

\vspace{.15 in}

We will let $A \subset \bb{F}_q^2$ so that $|A| = 2q-1$. If $A$ is larger, we may simply choose a subset of size $2q-1$. Choose any spread $\theta \neq 0$ and let $\mu, \nu$ be two different lines through the origin with spread $\theta$ between them. Let $L_\mu$ be the set of lines parallel to $\mu$ and $L_\nu$ the set of lines parallel to $\nu$. Clearly, $|L_\mu| = |L_\nu| = q$. It suffices to show that there are three points $a,b,c \in A$ so that $b$ and $a$ lie on the same line in $L_\mu$ and $b$ and $c$ lie on the same line of $L_\nu$. Notice, by pigeonholing, that
$$
\#\{x \in A: |l_\mu(x) \cap A| \geq 2\} \geq q.
$$
where $l_\mu(x)$ is the line in $L_\mu$ containing the point $x$. In other words, there are at least $q$ points in $A$ that share a line of $L_\mu$ with another point of $A$. This of course applies to lines of $L_\nu$ also. By further pigeonholing, since there are only $2q-1$ points to begin with, there must be a point $b$ of $A$ which shares its $L_\mu$ line with a point $a \in A$ and its $L_\nu$ line with a point $c \in A$. Thus $(a,b,c)$ produces spread $\theta$ in $A$.
\end{proof}

\begin{remark}
Unfortunately, we had to avoid spread 0 in the above argument. In dimension $2$, three points generate a spread of $0$ only when they are collinear. If we try to apply the above argument to this case, then we get $L_\mu = L_\nu$ and the argument breaks down. We can still show that $2q-1$ points in $\bb{F}_q^2$ are enough to produce a spread of $0$, but we will do this in the next section using a different argument. Before doing so, we should note that there is an important difference between $180^\circ$ angles in Euclidean space and ``$180^\circ$ angles" (i.e. spread $0$) in vector spaces over finite fields. One can easily check that, in $\bb{F}_q^d$, three collinear points give a spread of 0 (or an undefined spread in the case that they lie on a line parallel to an isotropic line). However, in dimensions higher than $2$, three points that generate a spread of 0 need not be collinear. Take, for example, the triple $((0,0,0),(1,0,0),(2,1,2)) \in \bb{F}_5^3 \times \bb{F}_5^3 \times \bb{F}_5^3$. 
\end{remark}

%\pagebreak

\section{Collinear triples}

In this section, we will take a look at ``$180^\circ$ angles" in vector spaces over finite fields, which are just sets of three distinct points lying on a common line. We do not intend to prove anything new here; we will simply make note of some known results. A ``$k$-cap" is a set of $k$ points in a finite affine or projective space in which no three points lie on a single line. In a given space, a cap of size $k$ is called a maximal cap if there is no cap of size greater than $k$. The problem of finding maximal caps, sometimes called ``the packing problem," has been studied extensively in both affine space (see, for instance, \cite{BE} and \cite{Pot}), and projective space (see, for instance, \cite{HS} and \cite{Chao}). One of the primary motivations behind the study of caps is its application to coding theory (see \cite{Hill}). Here, we will focus primarily on the size of these maximal caps, denoted $C_d(q)$. Before we begin, note that we have been lazily using $\bb{F}_q^d$ to represent both the vector space $\bb{F}_q^d$ and the affine space $AG(d,q)$. In the interest of staying consistent with the literature on caps, we will use $AG(d,q)$ in this section.

Since corresponding affine and projective spaces only differ by a hyperplane at infinity, the sizes of maximal caps in $AG(d,q)$ and $PG(d,q)$ will be similar. That is, any cap in $AG(d,q)$ can be turned into a cap of the same size in $PG(d,q)$ via $(a,b) \rightarrow [a,b,1]$. Likewise, any maximal cap in $PG(d,q)$ will have only a relatively small portion of the points lying on the hyperplane at infinity, and disregarding those points gives us a cap in $AG(d,q)$.

The packing problem is well understood in the affine spaces $AG(d,q)$ when $d \leq 3$. In fact, we know precisely that if $q$ is odd, a maximal cap in $AG(2,q)$ has size $q+1$ and must be a conic (if $q \equiv 3 \bmod 4$, then the unit circle is one example). If $q$ is even, a maximal cap in $AG(2,q)$ has size $q+2$, but is not always defined by a conic. Similarly, a maximal cap in $AG(3,q)$ is always one point short of an elliptic quadric, and has size $q^2$ (see \cite{EFLS}).

%In fact, any hypersurface in $\bar{\bb{F}}_q^d$ ($\bar{\bb{F}}_q$ is the algebraic closure of $\bb{F}_q$) with degree less than $2d-3$ contains several entire lines (see \cite{Beh}).

When $d \geq 4$, the precise value of $C_d(q)$ is known for very few $q$, though some trivial bounds can be obtained. In any cap $A$, each pair of points must determine a unique line, and thus the number of pairs of points cannot exceed the number of distinct lines in $AG(d,q)$. Therefore
$$
\frac{|A|(|A|-1)}{2} \leq \frac{q^{d-1}(q^d - 1)}{q-1},
$$
which gives us $C_d(q) = O(q^{d-1})$.

One may suspect that a $(d-1)$-sphere in $AG(d,q)$ should only grant two points per line and thus be a cap of size $q^{d-1}(1+o(1))$, making our above observation sharp. Alas, this is not the case: in $AG(d,q)$, spheres of dimension greater than $3$ always contain lines. In particular, let $\vec{x}$ be any unit vector. In dimensions $4$ and higher, we can always find an isotropic vector $\vec{v}$ so that $\vec{v} \cdot \vec{x} = 0$. Then it is easy to check that the line $\{\vec{x}+t\vec{v}: t \in \bb{F}_q\}$, when viewed as a set of points, is contained in the unit sphere.

For a lower bound on $C_d(q)$, we note that if $A$ is a cap in $AG(m,q)$ and $B$ a cap in $AG(n,q)$, then $A \times B$ is a cap in $AG(m,q) \times AG(n,q) \cong AG(m+n,q)$. To show this, suppose that $A \times B$ has three points $a,b,c$ lying on a common line. Project $a$, $b$, and $c$ onto $AG(m,q)$. This projection cannot give us three distinct points, as otherwise $A$ would not be a cap in $AG(m,q)$, so all three points must project to the same point. By the same logic, their projections onto $AG(n,q)$ must be equal as well. But this is possible only if $a=b=c$. Therefore, a direct product of caps is itself a cap in the product space.

This allows us to form large caps in high-dimensional spaces out of low dimensional-caps. In particular, we can construct caps of size at least $q^{\lfloor 2d/3 \rfloor}$ in $AG(d,q)$: we take a $\lfloor d/3 \rfloor$-fold cross product of $q^2$-caps in $AG(3,q)$, and if $d \equiv 2 \bmod 3$, we may tack on a final $(q+1)$-cap in $AG(2,q)$. This tells us $C_d(q) = \Omega(q^{\lfloor 2d/3 \rfloor})$.

While maximal caps have been studied extensively, there has not been much success in improving the the exponents of the main term, i.e. $$\lfloor 2d/3 \rfloor \leq \log_q(C_d(q)) \leq d-1.$$ In \cite{HS}, Hirschfeld and Storme collected the best known bounds on maximal caps in $PG(d,q)$. While they are non-trivial, one can see that the best known upper bounds (Tables 4.4(i) and (ii)) are still on the order of $q^{d-1}$, and the best-known lower bounds (Tables 4.6(i), (ii),(iii)) are on the order of $q^{\lfloor 2d/3 \rfloor}$. There are, however, some improvements of note. Bierbrauer and Edel (\cite{BE}) applied a result of Meshulam (\cite{Mesh}) to show that $C_d(q) \leq \frac{d+1}{d^2} \cdot q^{d-1}$. More recently, Ellenberg and Gijswijt (\cite{ElGi}) used a method of Croot, Lev, and Pach (\cite{CLP}) to tackle the special case of $q = 3$. Through a clever application of the polynomial method, they showed that $C_d(3) \leq 2.756^d$, or equivalently, $C_d(3) \leq 3^{0.923d}$. While it may be possible to apply a similar method to get a result for arbitrary $q$, their proof makes use of the fact that any three collinear points in $AG(d,3)$ form an arithmetic progression, which cannot be said when $q > 3$.

\pagebreak

\section{Acknowledgements}

I would like to thank Alex Iosevich and Jonathan Pakianathan for some helpful discussions during the development of this paper.

\end{document}